\def\NZQ{\mathbb}               
\def\ZZ{{\NZQ Z}}
\def\RR{{\NZQ R}}
\def\ab{{\mathbf a}}
\def\eb{{\mathbf e}}
\def\fb{{\mathbf f}}
\def\xb{{\mathbf x}}
\def\Ib{{\mathbf I}}
\def\opn#1#2{\def#1{\operatorname{#2}}} 
\opn\ini{in} \opn\sgn{sgn} \opn\Gr{Gr} \opn\Im{Im}
\def\Ac{{\mathcal A}}
\def\Rc{{\mathcal R}}
\def\Ic{{\mathcal I}}
\def\Gc{{\mathcal G}}
\def\Fc{{\mathcal F}}
\def\Rc{{\mathcal R}}
\def\kk{{\Bbbk}}
\newtheorem{thm}{Theorem}[section]
\newtheorem{cor}[thm]{Corollary}
\newtheorem{prop}[thm]{Proposition}
\newtheorem{prob}[thm]{Problem}
\newtheorem{q}[thm]{Question}
\theoremstyle{definition}
\newtheorem{ex}[thm]{Example}
\theoremstyle{remark}
\newtheorem{rem}[thm]{Remark}
\begin{document}

\title{Binomial edge rings of complete bipartite graphs}

\author{Akihiro Higashitani}

\address{Akihiro Higashitani,
Graduate School of Information Science and Technology,
Osaka University, Japan}
\email{higashitani@ist.osaka-u.ac.jp}

\subjclass[2020]{
Primary: 13F65; 
Secondary; 05E40, 13P10, 14M25, 14M15. 
}
\keywords{binomial edge rings, complete bipartite graphs, Pl\"{u}cker algebras, SAGBI basis, Hibi rings.}

\begin{abstract}
We introduce a new class of algebras arising from graphs, called binomial edge rings. 
Given a graph $G$ on $d$ vertices with $n$ edges, the binomial edge ring of $G$ is defined to be the subalgebra of the polynomial ring with $2d$ variables 
generated by the binomials which correspond to $n$ edges. 
In this paper, we calculate a SAGBI basis for this algebra and obtain an initial algebra associated with this SAGBI basis in the case of complete bipartite graphs. 
It turns out that such an initial algebra is isomorphic to the Hibi ring of a certain poset. 
Similar phenomenon also occurs in the context of Pl\"{u}cker algebras, so the framework of binomial edge rings can be interpreted as a kind of its generalization. 
\end{abstract}

\maketitle

\section{Introduction}

Throughout this paper, let $\kk$ be a field. 
All graphs appearing in this paper are always assumed to be finite, simple and connected. 

\subsection{Ideals and algebras arising from graphs}

One of the main topics in the theory of combinatorial commutative algebra 
is the study of ideals/algebras arising from combinatorial objects, such as graphs, posets, simplicial complexes, matroids, and so on. 
In particular, there are several kinds of ideals/algebras arising from graphs. 
We briefly introduce some of them below.

Let $G$ be a graph on the vertex $V(G)=[d]:=\{1,\ldots,d\}$ with the edge set $E(G)$. 
Let \[R=\kk[x_1,\ldots,x_d] \;\text{ and }\; S=\kk[x_1,\ldots,x_d,y_1,\ldots,y_d]\] 
be the polynomial rings in $d$ and $2d$ variables over $\kk$, respectively.  
\begin{itemize}
\item The \textit{edge ideal} of $G$ is defined to be the ideal \[(x_ix_j : \{i,j\} \in E(G)) \subset R\] 
generated by quadratic monomials corresponding to the edges of $G$. 
As far as the author knows, the study of edge ideals seems initiated in \cite{simis-vasconcelos-villarreal-1994}. 
Nowadays, there are a huge number of studies on edge ideals of graphs. 
For an introduction to edge ideals and some fundamental results on them, see, e.g., \cite[Section 9]{herzog-hibi-book} and \cite[Section 7]{villarreal}. 
\item The \textit{edge ring} of $G$ is defined to be the subalgebra \[\kk[x_ix_j : \{i,j\} \in E(G)] \subset R\] 
generated by quadratic monomials corresponding to the edges of $G$.  
The intensive investigation of this $\kk$-algebra has begun after the works \cite{ohsugi-hibi-1998} and \cite{simis-vasconcelos-villarreal-1998}. 
There are also numerous studies on edge rings and their defining ideals, called the \textit{toric ideals of $G$}. 
See, e.g., \cite[Section 5]{herzog-hibi-ohsugi} and \cite[Sections 10 and 11]{villarreal} for their introduction. 
\item The \textit{binomial edge ideal} of $G$ is defined to be the ideal \[(x_iy_j-x_jy_i : \{i,j\} \in E(G)) \subset S\] 
generated by quadratic binomials corresponding to the edges of $G$. 
This ideal has been independently introduced by \cite{HHHKR-2010} and \cite{ohtani-2012}. 
There are also a large number of studies on binomial edge ideals. See, e.g., \cite[Section 7]{herzog-hibi-ohsugi} for their introduction.
\end{itemize}

Following these trends, in this paper, we introduce a new class of $\kk$-algebras arising from $G$. Let  
$$\Rc(G):=\kk[x_iy_j-x_jy_i : \{i,j\} \in E(G)] \subset S.$$
We call this subalgebra of $S$ the \textit{binomial edge ring} of $G$. 

Given $1 \leq i<j \leq d$, we employ the following notation: 
\[f_{ij}:=x_iy_j-x_jy_i \in S.\]
Namely, $\Rc(G)$ is the subalgebra of $S$ generated by $\{f_{ij} : \{i,j\} \in E(G)\}$.

\subsection{Binomial edge rings of graphs and Pl\"{u}cker algebras}
To explain other motivation of introducing binomial edge rings of graphs, let us briefly recall the fundamental facts on Pl\"{u}cker algebras. 

Let $1 \leq k< d$ and let $X=(x_{ij})_{1 \leq i \leq k, 1 \leq j \leq d}$ be a $k \times d$-matrix of indeterminates. 
Given $I \in \Ib_{k,d}:=\{I \subset [d] : |I|=k\}$, let $X_I$ denote the $k \times k$-submatrix of $X$ whose columns are indexed by $I$. 
We define the subalgebra of the polynomial ring $\kk[x_{ij} : 1 \leq i \leq k, 1 \leq j \leq d]$ in $kd$ variables as follows: 
\[\Ac_{k,d}:=\kk[\det(X_I) : I \in \Ib_{k,d}] \subset \kk[x_{ij} : 1 \leq i \leq k, 1 \leq j \leq d].\]
The algebra $\Ac_{k,d}$ is called the \textit{Pl\"{u}cker algebra}. 
This is well known as a homogeneous coordinate ring of the Grassmannian $\Gr_\kk(k,d)$, the set of $k$-subspaces of $\kk^d$. 
Here, we notice that $\Ac_{2,d}$ coincides with the binomial edge ring of the complete graph $K_d$. 
Hence, we can regard $\Rc(G)$ as a kind of generalization of the Pl\"{u}cker algebra. 

For the introduction to SAGBI basis and initial algebra, see Subsection~\ref{sec:SAGBI}. 
It is known that $\{\det(X_I) : I \in \Ib_{k,d}\}$ forms a SAGBI basis for $\Ac_{k,d}$ with respect to a certain monomial order. 
Moreover, the associated initial algebra is isomorphic to a Hibi ring (see Subsection~\ref{sec:Hibi}) of a certain poset. 
See, e.g., \cite{miller-sturmfels} and \cite{sturmfels}.

On the other hand, whether $\{\det(X_I) : I \in \Ib_{k,d}\}$ forms a SAGBI basis or not strongly depends on a choice of monomial orders. 
For example, computational experiments on this are performed in \cite{bruns-conca}. 

Therefore, in the context of binomial edge rings, it is natural to think of a SAGBI basis of $\Rc(G)$ and to discuss the structure of the associated initial algebra.

\subsection{Main Results}
Our main interest is the binomial edge ring of the complete graph $K_{a,b}$ with $2 \leq a \leq b$ and a SAGBI basis of $\Rc(K_{a,b})$. 
First, we give a SAGBI basis for $\Rc(K_{a,b})$, which is the first main result of this paper.  
\begin{thm}\label{main1} Let $2 \leq a \leq b$ and let \begin{align*}
\Gc_{a,b}=&\{f_{ij} : 1 \leq i \leq a, \, 1 \leq j \leq b\} \\
\cup &\{f_{ij'}f_{i'j} - f_{ij}f_{i'j'} : 1 \leq i<i' \leq a, \, 1 \leq j'<j \leq b\}. 
\end{align*}
Then $\Gc_{a,b}$ forms a SAGBI basis of $\Rc(K_{a,b})$ with respect to the monomial order $<$ described in \eqref{order}. 
\end{thm}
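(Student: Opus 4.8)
The plan is to prove Theorem~\ref{main1} through the Hilbert-function criterion for SAGBI bases. Write $B=\kk[\ini_<(g):g\in\Gc_{a,b}]$ for the $\kk$-subalgebra of $S$ generated by the leading terms. Since $<$ respects the $\ZZ$-grading of $S$ by total degree, under which $\Rc(K_{a,b})$ and $B$ are both graded subalgebras, one always has the inclusion $B\subseteq\ini_<(\Rc(K_{a,b}))$, and $\Gc_{a,b}$ is a SAGBI basis exactly when $\dim_\kk B_n=\dim_\kk\Rc(K_{a,b})_n$ for all $n$. So the task splits into computing $\dim_\kk B_n$ and bounding $\dim_\kk\Rc(K_{a,b})_n$ from above by the same number.

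First I would record the leading terms dictated by \eqref{order}. For the edge binomials $\ini_<(f_{ij})$ is the appropriate "diagonal" monomial. For a product-type generator the two naive products of diagonal terms cancel; invoking the classical Pl\"ucker relation, $f_{ij'}f_{i'j}-f_{ij}f_{i'j'}$ equals, up to sign, a product of two further $2\times2$ minors of the $2\times(a+b)$ matrix of indeterminates, and since $<$ is multiplicative its leading term is the product of the diagonal terms of those two minors. This cancellation is precisely why the product-type elements must be adjoined. Along the way one checks $\Gc_{a,b}\subseteq\Rc(K_{a,b})$, which is trivial for the edge binomials and immediate for the product-type elements since they are polynomial expressions in the $f_{ij}$.

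Next I would identify $B$. Collecting the leading monomials above, $B$ is generated by an explicit list of squarefree monomials, and I would show $B\cong\Hc(P_{a,b})$ for an explicit poset $P_{a,b}$ built from the grid $[a]\times[b]$. This mirrors the classical Grassmannian picture, in which a suitable order makes $\{\det X_I\}$ a SAGBI basis of $\Ac_{2,a+b}=\Rc(K_{a+b})$ with initial algebra the Hibi ring of the poset of pairs $\{(r,s):1\le r<s\le a+b\}$, and $B\subseteq\ini_<(\Rc(K_{a,b}))\subseteq\ini_<(\Ac_{2,a+b})$ should sit inside it as the subalgebra singled out by the bipartition. Concretely one exhibits $P_{a,b}$, checks that the leading monomials multiply according to the join--meet rule of the distributive lattice $J(P_{a,b})$, and --- the delicate point --- verifies that the toric ideal of $B$ has no relations beyond these Hibi relations. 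Granting this, normality of Hibi rings and the fact that Hibi relations form a Gr\"obner basis yield $\dim_\kk B_n$ as an explicit count of chains in $J(P_{a,b})$.

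Finally one must bound $\dim_\kk\Rc(K_{a,b})_n$ above by $\dim_\kk B_n$, and this straightening step is the heart of the proof and the main obstacle. Given an arbitrary product of edge binomials whose leading monomial is non-standard --- divisible by the leading term of some Hibi relation of $P_{a,b}$ lifted to the $f_{ij}$ --- one rewrites it modulo strictly $<$-smaller products; since $<$ well-orders the finitely many monomials that can occur in a fixed degree, the rewriting terminates and exhibits every element of $\Rc(K_{a,b})_n$ as a $\kk$-combination of those products whose leading monomials are standard, which number exactly $\dim_\kk B_n$; hence $\dim_\kk\Rc(K_{a,b})_n\le\dim_\kk B_n$. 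Combined with the trivial inclusion this gives
\[
\dim_\kk\Rc(K_{a,b})_n\;\le\;\dim_\kk B_n\;\le\;\dim_\kk\ini_<(\Rc(K_{a,b}))_n\;=\;\dim_\kk\Rc(K_{a,b})_n,
\]
so equality holds throughout, $B=\ini_<(\Rc(K_{a,b}))$, and $\Gc_{a,b}$ is a SAGBI basis; moreover $\ini_<(\Rc(K_{a,b}))\cong\Hc(P_{a,b})$ as a by-product. Thus the crux is to choose the order in \eqref{order} so that the product-type elements generate, at the level of leading terms, all Hibi relations of $P_{a,b}$, and so that subduction against $\Gc_{a,b}$ always terminates in a normal form; the individual leading-term computations, the membership statements, and the Hibi-ring dimension count are routine.
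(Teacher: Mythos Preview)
Your plan is sound in outline and reaches the same conclusion, but it is organized around a different criterion than the paper's. You propose the Hilbert-function/straightening route: identify $B=\kk[\ini_<(\Gc_{a,b})]$ with a Hibi ring, then bound $\dim_\kk\Rc(K_{a,b})_n\le\dim_\kk B_n$ by rewriting non-standard products into standard ones plus lower terms. The paper instead invokes Sturmfels' toric-ideal criterion (Theorem~\ref{sagbi}): it shows $\ini_{A^Tw}(I)=I_A$ by producing an ideal $J\subseteq I$ of explicit relations among the $\Gc_{a,b}$-generators (the list~\eqref{eq:J}) whose $A^Tw$-initial forms are exactly the Hibi binomials~\eqref{eq:binom} generating $I_A$, and closes the chain $I_A\subseteq\ini_{A^Tw}(J)\subseteq\ini_{A^Tw}(I)\subseteq I_A$. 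Your Pl\"ucker factorization of $f_{ij'}f_{i'j}-f_{ij}f_{i'j'}$ as a product of two ``non-bipartite'' $2\times2$ minors is a clean observation the paper does not make, and it does streamline the leading-term computation.

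That said, both routes bottleneck at the same computation, which you correctly flag as ``the main obstacle'' but leave undone. Your straightening step requires, for each Hibi relation $z_Iz_J-z_{I\cup J}z_{I\cap J}$ of $\Pi_{a,b}$, an explicit rewriting of the corresponding product of two $\Gc_{a,b}$-generators as the sorted product plus a $\kk$-combination of $\Gc_{a,b}$-products with strictly smaller leading monomials. Merely knowing that $g_1g_2-g_1'g_2'\in\Rc(K_{a,b})$ has smaller leading term is not enough: the induction needs that difference \emph{expressed} in $\Gc_{a,b}$-products, each term already of smaller leading monomial. The paper supplies precisely these lifts --- thirteen families, some with up to five terms --- in~\eqref{eq:J}, and then verifies term by term that the first two monomials of each give the corresponding binomial of~\eqref{eq:binom} under the weight $w$. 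So your framework is correct but converges to exactly the explicit computation the paper carries out; what differs is the packaging (straightening versus $\ini_{A^Tw}(I)=I_A$), not the substance.
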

See Subsection~\ref{sec:label} for the detail of the notation. 

Once a SAGBI basis is given, we obtain an initial algebra of $\Rc(G)$ which encodes its several ring-theoretic properties. 
Let $\Pi_{a,b}$ be the poset depicted in Figure~\ref{piab}: 
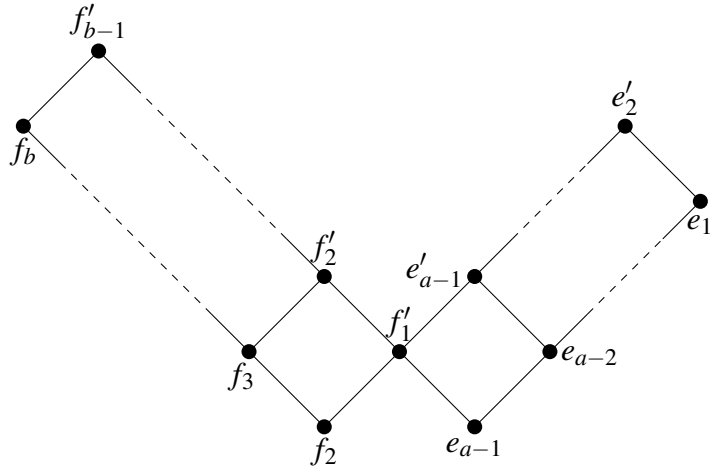
\begin{figure}[h]
\begin{tikzpicture}
    \draw (0,1) -- (1,0);
    \draw (1,2) -- (2,1);
    \draw (3,4) -- (4,3);
    \draw (0,1) -- (-1,0);
    \draw (-1,2) -- (-2,1);
    \draw (-4,5) -- (-5,4);

    \draw (1,0) -- (2.5,1.5);
    \draw (0,1) -- (1.5,2.5);
    \draw (-1,0) -- (-2.5,1.5);
    \draw (0,1) -- (-1.5,2.5);
    \draw (3.5,2.5) -- (4,3);
    \draw (2.5,3.5) -- (3,4);
    \draw (-4,5) -- (-3.5,4.5);
    \draw (-5,4) -- (-4.5,3.5);

    \draw[dashed] (2.5,1.5) -- (3.5,2.5);
    \draw[dashed] (1.5,2.5) -- (2.5,3.5);
    \draw[dashed] (-1.5,2.5) -- (-3.5,4.5);
    \draw[dashed] (-2.5,1.5) -- (-4.5,3.5);

    \fill [black, inner sep=2pt] (0,1) circle(0.1) node at (0,1) [above=2pt] {$f_1'$};
    \fill [black, inner sep=2pt] (1,0) circle(0.1) node at (1,0) [below=2pt] {$e_{a-1}$};
    \fill [black, inner sep=2pt] (1,2) circle(0.1) node at (1,2) [above=2pt, left=2pt] {$e_{a-1}'$};
    \fill [black, inner sep=2pt] (2,1) circle(0.1) node at (2,1) [below=2pt, right=2pt] {$e_{a-2}$};
    \fill [black, inner sep=2pt] (3,4) circle(0.1) node at (3,4) [above=2pt] {$e_2'$};
    \fill [black, inner sep=2pt] (4,3) circle(0.1) node at (4,3) [below=3pt] {$e_1$};
    \fill [black, inner sep=2pt] (-1,0) circle(0.1) node at (-1,0) [below=2pt] {$f_2$};
    \fill [black, inner sep=2pt] (-1,2) circle(0.1) node at (-1,2) [above=2pt] {$f_2'$};
    \fill [black, inner sep=2pt] (-2,1) circle(0.1) node at (-2.1,1) [below=2pt] {$f_3$};
    \fill [black, inner sep=2pt] (-4,5) circle(0.1) node at (-4,5) [above=2pt] {$f_{b-1}'$};
    \fill [black, inner sep=2pt] (-5,4) circle(0.1) node at (-5,4) [below=2pt] {$f_b$};
\end{tikzpicture}
\caption{The associated poset $\Pi_{a,b}$}\label{piab}
\end{figure}

The following theorem and its corollary are the second main result of this paper. 
\begin{thm}\label{main2}
Let $2 \leq a \leq b$. 
The initial algebra of $\Rc(K_{a,b})$ with respect to $<$ described in \eqref{order} is isomorphic to the Hibi ring of $\Pi_{a,b}$.  
\end{thm}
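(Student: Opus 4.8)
The plan is to use the SAGBI basis from Theorem~\ref{main1} to realize $\ini_<(\Rc(K_{a,b}))$ as an explicit affine semigroup ring $\kk[M]$, to identify its monomial generators with the poset ideals of $\Pi_{a,b}$, and then to upgrade the resulting surjection onto the Hibi ring to an isomorphism by comparing Krull dimensions. By Theorem~\ref{main1}, $\ini_<(\Rc(K_{a,b}))$ is generated over $\kk$ by $\{\ini_<(g):g\in\Gc_{a,b}\}$, so I would first compute these initial monomials. For the generators $f_{ij}$ this is read off directly from the order \eqref{order}. For the degree-two elements of $\Gc_{a,b}$ one records the identity
\[ f_{ij'}f_{i'j}-f_{ij}f_{i'j'}=f_{ii'}\,f_{j'j} \]
in $S$, where $f_{ii'}$ and $f_{j'j}$ denote the $2\times2$ binomials $x_uy_v-x_vy_u$ attached to the pairs $\{i,i'\}$ and $\{j',j\}$ lying inside the two parts of $K_{a,b}$ (neither pair is an edge, but the identity holds all the same); since $<$ is multiplicative and $S$ is a domain, $\ini_<(f_{ij'}f_{i'j}-f_{ij}f_{i'j'})=\ini_<(f_{ii'})\,\ini_<(f_{j'j})$. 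Thus $\ini_<(\Rc(K_{a,b}))=\kk[M]$ for an explicit affine monoid $M\subset\ZZ^{2(a+b)}$, generated by $ab$ squarefree degree-$2$ monomials $u_{ij}$ (for $1\le i\le a$, $1\le j\le b$) and $\binom a2\binom b2$ squarefree degree-$4$ monomials $w_{ii'jj'}$ (for $1\le i<i'\le a$, $1\le j'<j\le b$).

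Next I would recall that $\Rc_\kk[\Pi_{a,b}]$ is the affine semigroup ring on generators $z_\alpha$, $\alpha\in\Jc(\Pi_{a,b})$, with defining ideal generated by the Hibi relations $z_\alpha z_\beta-z_{\alpha\wedge\beta}z_{\alpha\vee\beta}$, and that $\dim\Rc_\kk[\Pi_{a,b}]=|\Pi_{a,b}|+1=2a+2b-4$. Reading the poset ideals off Figure~\ref{piab}, I would exhibit a bijection
\[ \Jc(\Pi_{a,b})\ \longleftrightarrow\ \{u_{ij}:1\le i\le a,\ 1\le j\le b\}\ \sqcup\ \{w_{ii'jj'}:1\le i<i'\le a,\ 1\le j'<j\le b\},\qquad\alpha\mapsto m_\alpha, \]
and check that $m_\alpha m_\beta=m_{\alpha\wedge\beta}m_{\alpha\vee\beta}$ holds in $S$ for all $\alpha,\beta$. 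This produces a surjective $\kk$-algebra homomorphism $\overline{\Phi}\colon\Rc_\kk[\Pi_{a,b}]\twoheadrightarrow\ini_<(\Rc(K_{a,b}))$ with $z_\alpha\mapsto m_\alpha$.

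It then remains to see that $\overline{\Phi}$ is injective. Both rings are finitely generated domains over $\kk$ (the target is a subring of the domain $S$, and $\ini_<(\Rc(K_{a,b}))$ is finitely generated by Theorem~\ref{main1}), so it suffices to check that they have the same Krull dimension, i.e.\ $\dim\Rc(K_{a,b})=2a+2b-4$. As an initial algebra has the same dimension as the algebra, this reduces to computing $\operatorname{rank}M$, namely the dimension of the $\QQ$-span of the exponent vectors of the $u_{ij}$ and $w_{ii'jj'}$; alternatively one uses that each $f_{ij}$ is invariant under an $\mathrm{SL}_2\times\kk^*$-action whose generic orbits have dimension $4$, so that the generic fibre of $(f_{ij})\colon\AA^{2(a+b)}\to\AA^{ab}$ is $4$-dimensional. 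Since a surjection between finitely generated $\kk$-domains of equal Krull dimension is an isomorphism, we conclude $\ini_<(\Rc(K_{a,b}))\cong\Rc_\kk[\Pi_{a,b}]$.

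I expect the main obstacle to be the construction in the second paragraph: pinning down the correct bijection $\Jc(\Pi_{a,b})\leftrightarrow\{u_{ij}\}\sqcup\{w_{ii'jj'}\}$ and verifying that the Hibi relations of $\Pi_{a,b}$ coincide exactly with the binomial relations satisfied by the $u_{ij}$ and $w_{ii'jj'}$ in $S$ (the basic one being $u_{ij}u_{i'j'}=u_{ij'}u_{i'j}$, together with the relations that tie in the $w_{ii'jj'}$). This is where the precise covering relations of $\Pi_{a,b}$ genuinely enter; the remaining steps are routine expansions or standard facts about Hibi rings, affine semigroup rings, and SAGBI bases.
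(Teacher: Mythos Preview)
Your plan is sound and reaches the result by a genuinely different route than the paper. In the paper, Theorems~\ref{main1} and~\ref{main2} are proved simultaneously: Step~(I) exhibits an explicit unimodular change of coordinates on the ambient lattice $\ZZ^{2(a+b)}$ carrying the configuration $A_{a,b}$ of exponent vectors of $\ini_<(\Gc_{a,b})$ bijectively onto the configuration $B_{a,b}$ of (lifted) poset ideals of $\Pi_{a,b}$; this gives the monoid isomorphism $\kk[A_{a,b}]\cong\kk[\Pi_{a,b}]$ directly, and the resulting identification $I_{A_{a,b}}=I_{\Pi_{a,b}}$ is then fed into the SAGBI criterion (Theorem~\ref{sagbi}) to establish Theorem~\ref{main1}, after which Theorem~\ref{main2} follows. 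You instead take Theorem~\ref{main1} as input, build only a one-sided map $\kk[\Pi_{a,b}]\twoheadrightarrow\kk[M]$ by checking that the Hibi relations $m_\alpha m_\beta=m_{\alpha\wedge\beta}m_{\alpha\vee\beta}$ hold among the chosen monomials, and recover injectivity a~posteriori from a Krull-dimension comparison. The paper's unimodular transformation is more economical---it delivers well-definedness and injectivity in one stroke and is reused to compute $I_{A_{a,b}}$ for the proof of Theorem~\ref{main1}---whereas your argument is more modular and makes transparent use of the presentation~\eqref{pi}. One caveat on the dimension step: invariance under $\mathrm{SL}_2\times\kk^*$ only tells you that the generic fibre has dimension \emph{at least}~$4$, hence $\dim\Rc(K_{a,b})\le 2(a+b)-4$; but to upgrade your surjection to an isomorphism you need the opposite inequality $\dim\kk[M]\ge 2(a+b)-4$. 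So you would either have to argue that the $f_{ij}$ actually separate generic $\mathrm{SL}_2\times\kk^*$-orbits (your identity $f_{ij'}f_{i'j}-f_{ij}f_{i'j'}=f_{ii'}f_{j'j}$ makes this accessible, since it expresses every ``missing'' minor product in terms of the bipartite ones), or simply carry out the direct rank computation of the $\QQ$-span of the exponent vectors that you also mention, which indeed yields $2(a+b)-4$.
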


\begin{cor}\label{kei} Let $2 \leq a \leq b$. Then we have the following: 
\begin{itemize}
\item $\dim (\Rc(K_{a,b}))=2(a+b-2)$; 
\item $\Rc(K_{a,b})$ is a Cohen--Macaulay domain; 
\item $\Rc(K_{a,b})$ is Gorenstein if and only if $a=2$ or $a=b$. 
\end{itemize}
\end{cor}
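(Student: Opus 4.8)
The plan is to deduce Corollary~\ref{kei} from Theorem~\ref{main2} by transporting the three ring-theoretic properties along the isomorphism between $\ini_<(\Rc(K_{a,b}))$ and the Hibi ring $\kk[\Pi_{a,b}]$, and then using the standard principle that passing to an initial algebra via a SAGBI basis preserves Krull dimension, the Cohen--Macaulay and domain properties, and the Gorenstein property. Concretely, one knows that $\dim \Rc(K_{a,b}) = \dim \ini_<(\Rc(K_{a,b}))$ whenever $\Gc_{a,b}$ is a SAGBI basis (Theorem~\ref{main1}), and that a Hibi ring $\kk[\Pi]$ is always a Cohen--Macaulay normal domain of Krull dimension $|\Pi| + 1$, which is Gorenstein precisely when $\Pi$ is pure (i.e.\ all maximal chains have the same length). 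So the corollary reduces to (i) counting the elements of $\Pi_{a,b}$, (ii) noting that $\Rc(K_{a,b})$ itself is a domain (it is a subalgebra of the polynomial ring $S$) and lifting Cohen--Macaulayness from the initial algebra, and (iii) characterising when $\Pi_{a,b}$ is pure.

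First I would read off from Figure~\ref{piab} that $\Pi_{a,b}$ has exactly $2(a-1) + 2(b-1) - 1 = 2(a+b-2) - 1$ elements: the ``$e$-chain'' contributes the $2(a-1)$ vertices $e_1, e_1', \dots, e_{a-1}, e_{a-1}'$ and the ``$f$-chain'' contributes $f_1', f_2, f_2', \dots, f_b$, and the two chains share the single bottom vertex. (This count should be pinned down carefully against the notation of Subsection~\ref{sec:label}, since the exact labelling of the shared element is the one place an off-by-one error could creep in.) Since $\dim \kk[\Pi_{a,b}] = |\Pi_{a,b}| + 1 = 2(a+b-2)$, Theorem~\ref{main2} together with the dimension equality $\dim \Rc(K_{a,b}) = \dim \ini_<(\Rc(K_{a,b}))$ gives the first bullet.

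For the second bullet, $\Rc(K_{a,b}) \subset S$ is a subalgebra of a polynomial ring, hence a domain. For Cohen--Macaulayness I would invoke the general fact that if $\ini_<(A)$ is Cohen--Macaulay then so is $A$ (this is the algebra analogue of the corresponding statement for initial ideals, and is standard in the SAGBI setting); since $\kk[\Pi_{a,b}]$ is Cohen--Macaulay by Hibi's theorem, we are done. For the third bullet I would use that a Hibi ring is Gorenstein if and only if the underlying poset is pure, and that the Gorenstein property likewise deforms: $\ini_<(A)$ Gorenstein implies $A$ Gorenstein. Thus it suffices to determine for which $a \le b$ the poset $\Pi_{a,b}$ is pure. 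Examining Figure~\ref{piab}, the maximal chain through the $e$-side has length governed by $a$ while the maximal chain through the $f$-side is governed by $b$; these lengths agree exactly when $a = b$, and the degenerate case $a = 2$ collapses the $e$-side so that purity holds as well, whereas for $2 < a < b$ the two sides have different heights and $\Pi_{a,b}$ fails to be pure. The main obstacle is thus the careful combinatorial bookkeeping of the chain lengths in $\Pi_{a,b}$ directly from the picture and the labelling conventions; once that is settled, the three conclusions follow formally from Hibi's structure theorem and the deformation properties of SAGBI degenerations.
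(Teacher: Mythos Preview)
Your proposal is correct and takes essentially the same approach as the paper: both deduce all three statements from Theorem~\ref{main2} by transferring dimension, Cohen--Macaulayness, and Gorensteinness from the initial algebra $\kk[\Pi_{a,b}]$ via the standard SAGBI/Hibi-ring facts, and then reading off when $\Pi_{a,b}$ is pure. The one point to tighten is the ``only if'' direction of the Gorenstein claim---you only state that $\ini_<(A)$ Gorenstein implies $A$ Gorenstein, whereas the paper (implicitly) uses that the Hilbert series of $A$ and $\ini_<(A)$ coincide, so once both are Cohen--Macaulay domains the Gorenstein property is detectable from the Hilbert series and hence transfers in both directions.
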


\begin{rem}
The graph $K_{1,b}$ is, so-called, a \textit{star graph}, which is a kind of trees. 
If $T$ is a tree, then we see that $\Rc(T)$ is isomorphic to a polynomial ring in $|E(T)|$ variables. 
For more details, see Example~\ref{ex:tree} (i). 
\end{rem}

\subsection{Structure of the paper}
A brief structure of this paper is as follows. 
In Section~\ref{sec;pre}, we prepare two central materials of this paper, SAGBI basis and Hibi rings. 
We also give a proof of Corollary~\ref{kei} here. 
In Section~\ref{sec:proofs}, we prove Theorems~\ref{main1} and \ref{main2} simultaneously. 
In Section~\ref{sec:q}, we discuss $\Rc(G)$ in the case of general graphs $G$ and suggest some further questions. 

\subsection*{Acknowledgements}
The author is partially supported by JSPS KAKENHI Grant Number JP24K00521 and JP21KK0043. 

\bigskip

\section{Preliminaries}\label{sec;pre}

The goal of this section is to introduce the notions of SAGBI basis and Hibi rings. Those play the central role in this paper. 

\subsection{SAGBI basis of a subalgebra}\label{sec:SAGBI}

First, we recall SAGBI basis from \cite{sturmfels}. 
(Remark that \textit{canonical basis} is used in \cite{sturmfels} instead of SAGBI basis, but those are the same things.) 
Fix a set of polynomials $\Fc=\{f_1,\ldots,f_n\}$ of $R$ and a monomial order $<$ on $R$. 
Consider the subalgebra $\kk[\Fc]$ of $R$ generated by the polynomials $f_1,\ldots,f_n$. 
The \textit{initial algebra} $\ini_<(\kk[\Fc])$ is the algebra generated by the initial monomials $\ini_<(g)$ for $g \in \kk[\Fc]$. 
Note that $\kk[\Fc]$ is not necessarily a finitely generated $\kk$-algebra even if $\kk[\Fc]$ is finitely generated (see \cite{Robbiano-Sweedler}). 
We say that $\Fc$ is a \textit{SAGBI basis} of $\kk[\Fc]$ with respect to $<$ 
if the initial algebra $\ini_<(\kk[\Fc])$ coincides with the subalgebra $\kk[\ini_<(\Fc)]$ generated by the initial monomials $\ini_<(f_1),\ldots,\ini_<(f_n)$. 
The word ``SAGBI'' is introduced by Robbiano and Sweedler \cite{Robbiano-Sweedler} and means ``Subalgebra Analog to Gr{\"o}bner Bases for Ideals''.

We discuss when $\Fc$ becomes a SAGBI basis of $\kk[\Fc]$ in terms of the associated toric ideal. 
For $i=1,\ldots,n$, let $\ab_i=(a_{1i},\ldots,a_{di}) \in \ZZ_{\geq 0}^d$ be such that $\ini_<(f_i)=\xb^{\ab_i}=x_1^{a_{1i}}\cdots x_d^{a_{di}}$ and 
let $A$ be the $d \times n$-matrix whose columns consist of $\ab_1,\ldots,\ab_n$. 
Let $I$ be the kernel of the following surjective ring homomorphism: 
\[ \kk[z_1,\ldots,z_n] \rightarrow \kk[\Fc]; \;\; z_i \mapsto f_i. \]
Similarly, let $I_A$ be the kernel of the surjective ring homomorphism: 
\[ \kk[z_1,\ldots,z_n] \rightarrow \kk[\xb^{\ab_1},\ldots,\xb^{\ab_n}]; \;\; z_i \mapsto \xb^{\ab_i}. \]

Take any weight vector $w \in \RR^d$ representing the monomial order $<$ for $\Fc$. Namely, $\ini_<(f_i)=\ini_w(f_i)$ holds for each $i$. 
Then we can use $A^T w \in \RR^n$ as a weight vector for $\kk[x_1,\ldots,x_n]$, where $A^T$ stands for the transpose of $A$.   
Remark that the initial ideal $\ini_{A^Tw}(I)$ is usually not a monomial ideal. 
We expect this is a binomial ideal as the following theorem claims.  
\begin{thm}[{\cite[Lemma 11.3 and Theorem 11.4]{sturmfels}}]\label{sagbi}
For any finite set $\Fc \subset R$, the inclusion $\ini_{A^Tw}(I) \subseteq I_A$ holds. 
Moreover, the equality of this inclusion holds if and only if $\Fc$ forms a SAGBI basis of $\kk[\Fc]$ with respect to $<$. 
\end{thm}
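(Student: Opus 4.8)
The plan is to establish the stated inclusion directly and then to obtain each direction of the equivalence by a subduction (subalgebra-division) argument. Write $\phi\colon\kk[z_1,\dots,z_n]\to\kk[\Fc]$ for $z_i\mapsto f_i$ and $\psi\colon\kk[z_1,\dots,z_n]\to\kk[\xb^{\ab_1},\dots,\xb^{\ab_n}]$ for $z_i\mapsto\xb^{\ab_i}$, so that $I=\ker\phi$ and $I_A=\ker\psi$. Since $\Fc$ is finite we may take $w$ to be a nonnegative integer vector representing $<$ on $\Fc$, so that the $A^{T}w$-degree $(A^{T}w)\cdot\bb$ of $z^{\bb}:=z_1^{b_1}\cdots z_n^{b_n}$ is a nonnegative integer, and (after rescaling each $f_i$, which changes neither $\kk[\Fc]$ nor the conclusion) we may assume the coefficient of $\xb^{\ab_i}$ in $f_i$ is $1$; write $\deg_w$ for the $w$-degree of a polynomial. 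The basic computation to record is that $\ini_w$ is multiplicative and $\ini_w(f_i)=\ini_<(f_i)=\xb^{\ab_i}$, so $\ini_w(\phi(z^{\bb}))=\xb^{A\bb}$ and $\deg_w(\phi(z^{\bb}))=(A^{T}w)\cdot\bb$. For the inclusion, take $g=\sum_\bb c_\bb z^{\bb}\in I$ and let $\gamma$ be its top $A^{T}w$-degree; in $0=\phi(g)=\sum_\bb c_\bb\,\phi(z^{\bb})$ only the terms with $(A^{T}w)\cdot\bb=\gamma$ reach $w$-degree $\gamma$, each contributing exactly its $w$-leading monomial $c_\bb\xb^{A\bb}$, so the $w$-degree-$\gamma$ component --- namely $\psi\bigl(\ini_{A^{T}w}(g)\bigr)=\sum_{(A^{T}w)\cdot\bb=\gamma}c_\bb\xb^{A\bb}$ --- vanishes. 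Hence $\ini_{A^{T}w}(g)\in I_A$, and since $I_A$ is an ideal, $\ini_{A^{T}w}(I)\subseteq I_A$.

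For ``$\Fc$ is a SAGBI basis $\Rightarrow \ini_{A^{T}w}(I)=I_A$'' it suffices to show $I_A\subseteq\ini_{A^{T}w}(I)$, and since $I_A$ is $A^{T}w$-homogeneous it is enough to lift a homogeneous $p\in I_A$ of degree $\gamma$. If $\phi(p)=0$ take $g=p$. Otherwise $h:=\phi(p)\ne 0$, and $\psi(p)=0$ together with the computation above forces $\deg_w(h)<\gamma$. Now subduct $h$ with respect to $<$: while $h_k\ne 0$, the SAGBI hypothesis gives $\ini_<(h_k)\in\kk[\ini_<(\Fc)]$, hence $\ini_<(h_k)=\xb^{A\bb^{(k)}}$ for some $\bb^{(k)}$, and subtracting a scalar multiple of $\phi(z^{\bb^{(k)}})$ strictly lowers $\ini_<(h_k)$; by the well-ordering of $<$ this terminates, giving $\phi(p)=\sum_k c_k\,\phi(z^{\bb^{(k)}})$, hence $g:=p-\sum_k c_k z^{\bb^{(k)}}\in I$. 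An induction on $k$ shows $(A^{T}w)\cdot\bb^{(k)}=\deg_w(\ini_<(h_k))\le\deg_w(h_k)<\gamma$, so the only part of $g$ in $A^{T}w$-degree $\gamma$ is $p$ itself, and therefore $\ini_{A^{T}w}(g)=p$. With the first paragraph this gives $\ini_{A^{T}w}(I)=I_A$.

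For the converse, assume $\ini_{A^{T}w}(I)=I_A$, fix $0\ne h\in\kk[\Fc]$, and let us show $\ini_<(h)\in\kk[\ini_<(\Fc)]$. First: among all $g$ with $\phi(g)=h$, the minimal $A^{T}w$-degree equals $\deg_w(h)$. Indeed it is always $\ge\deg_w(h)$; if a minimal representative $g$ had degree $\gamma>\deg_w(h)$, then $\psi(\ini_{A^{T}w}(g))$ would be the vanishing $w$-degree-$\gamma$ component of $h$, so $\ini_{A^{T}w}(g)\in I_A=\ini_{A^{T}w}(I)$, and subtracting from $g$ an element of $I$ with the same initial form would lower the degree --- contradiction (a minimal representative exists because the $A^{T}w$-degrees lie in $\NN$). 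Taking such a minimal $g$, $\psi(\ini_{A^{T}w}(g))=\ini_w(h)$, so $\ini_w(h)\in\Im(\psi)=\kk[\ini_<(\Fc)]$, which is spanned by the monomials $\xb^{A\bb}$. If $\deg_w(\ini_<(h))=\deg_w(h)$, then $\ini_<(h)$ is one of the monomials of $\ini_w(h)$ and hence lies in $\kk[\ini_<(\Fc)]$, as desired. If not, write $\ini_w(h)=\sum_\bb c_\bb\xb^{A\bb}$ and subtract the matching scalar multiples of $\phi(z^{\bb})$ from $h$: this produces $h'\in\kk[\Fc]$ with $\ini_<(h')=\ini_<(h)$ but strictly smaller $w$-degree (each subtracted monomial lies below $\ini_<(h)$ in $<$, having strictly larger $w$-weight). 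Iterating this ``$w$-subduction'' --- the $w$-degree cannot fall below the fixed value $\deg_w(\ini_<(h))$, so it stops --- reduces to the previous case. As $h$ was arbitrary, $\ini_<(\kk[\Fc])\subseteq\kk[\ini_<(\Fc)]$, i.e.\ $\Fc$ is a SAGBI basis.

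I expect the main obstacle to lie in the bookkeeping of the last two paragraphs: running the two subduction processes to termination while controlling $A^{T}w$-degrees, remembering that $w$ represents $<$ only on $\Fc$, so that $\ini_w(h)$ need be neither a monomial nor aware of $\ini_<(h)$. Passing to a nonnegative integral weight vector (so all relevant degrees lie in $\NN$), using the well-ordering of $<$ for the $<$-subduction, and using a $w$-subduction to bring $\deg_w(h)$ down to $\deg_w(\ini_<(h))$ are the devices that settle this; the reduction from an arbitrary real weight vector to an integral one, and the routine facts (multiplicativity of initial forms, $A^{T}w$-homogeneity of $I_A$), I would treat as standard.
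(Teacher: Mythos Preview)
The paper does not give its own proof of this statement; it is quoted from \cite[Lemma~11.3 and Theorem~11.4]{sturmfels} and used as a black box. Your argument is essentially the standard one found there and is correct.

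Two small points of exposition. First, in the last paragraph the parenthetical ``each subtracted monomial lies below $\ini_<(h)$ in $<$, having strictly larger $w$-weight'' is compressed to the point of being misleading: a larger $w$-weight does not by itself force a monomial to be below another in $<$, since $w$ only represents $<$ on $\Fc$. The correct reasoning (which you surely intend) is that each $\xb^{A\bb}$ is a monomial actually occurring in $h$, and its $w$-weight $\deg_w(h)>\deg_w(\ini_<(h))$ shows it is \emph{different} from $\ini_<(h)$, hence strictly below it; the remaining monomials of $\phi(z^{\bb})$ lie below $\xb^{A\bb}=\ini_<(\phi(z^{\bb}))$ and therefore also below $\ini_<(h)$. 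Second, the step ``subtracting from $g$ an element of $I$ with the same initial form would lower the degree'' tacitly uses that every nonzero $A^{T}w$-homogeneous element of $\ini_{A^{T}w}(I)$ is the initial form of some element of $I$ of the same degree; this is standard but worth stating, since membership in the ideal $\ini_{A^{T}w}(I)$ does not a priori mean being an initial form.
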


Thanks to the fundamental theory on SAGBI basis, we can prove Corollary~\ref{kei} by using Theorem~\ref{main2}. 
\begin{proof}[Proof of Corollary~\ref{kei}]
In general, for a finitely generated graded $\kk$-algebra $T$ and a fixed monomial order $<$, 
we know that the Hilbert series of $T$ coincides with that of its initial algebra $\ini_<(T)$ (see, e.g., \cite{Robbiano-Sweedler}). 
In particular, $\dim T=\dim (\ini_<(T))$. Since the Krull dimension of the Hibi ring of a poset $\Pi$ is equal to $|\Pi|+1$ (see Subsection~\ref{sec:Hibi} below), 
we conclude $\dim (\Rc(K_{a,b}))=\dim (\kk[\Pi_{a,b}])=2(a+b-2)$. 
Moreover, by \cite[Corollary 3.3.5]{herzog-hibi-book}, it follows that if $\ini_<(\Rc(K_{a,b}))$ is Cohen-Macaulay/Gorenstein, then so is $\Rc(K_{a,b})$. 
The Gorensteinness of the Hibi ring of $\Pi$ is equivalent to that all of the maximal chains in $\Pi$ have the same length (\cite{hibiring}). 
\end{proof}

\subsection{Hibi rings of a finite poset}\label{sec:Hibi}

Next, we recall Hibi rings. 
Let $\Pi=\{p_1,\ldots,p_{d-1}\}$ be a finite poset equipped with a partial order $\prec$. 
For a subset $I \subset \Pi$, we say that $I$ is a \textit{poset ideal} if $I$ satisfies that ``$x \in I$ and $y \prec x$ imply $y \in I$''. 
We regard the empty set as a poset ideal. Let $\Ic(\Pi)$ be the set of all poset ideals of $\Pi$. 
Poset ideals in $\Pi$ are one-to-one corresponding to antichains in $\Pi$, 
i.e., a subset $A \subset \Pi$ satisfying $x \not\prec y$ and $y \not\prec x$ for any $x,y \in A$. 
In fact, for a given antichain $A$, we can associate a poset ideal \[\langle p : p \in A \rangle := \{q \in \Pi : q \preceq p \text{ for some }p \in A\} \subset \Pi.\] 
Conversely, given a poset ideal $I$, we may take the set of all maximal elements of $I$ with respect to $\prec$ to get the associated antichain. 

In what follows, we identify a subset $K \subset \Pi$ with a point in $\ZZ^\Pi (=\ZZ^{d-1})$ by $\sum_{p_i \in K}\eb_i$, 
where $\eb_i$ is the $i$-th unit vector of $\ZZ^{d-1}$ and $\emptyset$ corresponds to the origin.

\medskip

We introduce the $\kk$-algebra associated to $\Pi$ as follows (see \cite{hibiring}): 
\begin{align*}
\kk[\Pi]:=\kk[(\prod_{p_i \in I} x_i)x_d : I \in \Ic(\Pi)] \subset \kk[x_1,\ldots,x_d].
\end{align*}
We call $\kk[\Pi]$ the \textit{Hibi ring} of $\Pi$. 
The following facts are known: 
\begin{itemize}
\item $\dim (\kk[\Pi])=|\Pi|+1$; 
\item $\kk[\Pi]$ is a normal Cohen--Macaulay domain; 
\item $\kk[\Pi]$ is Gorenstein if and only if all of the maximal chains in $\Pi$ have the same length. 
\end{itemize}

Let $\kk[z_I : I \in \Ic(\Pi)]$ be a polynomial ring in $|\Ic(\Pi)|$ variables and consider the surjective ring homomorphism: 
\[\kk[z_I : I \in \Ic(\Pi)] \rightarrow \kk[\Pi]; \;\; z_I \mapsto (\prod_{p_i \in I} x_i)x_d.\]
We call the kernel of this map the \textit{toric ideal} of $\Pi$, denoted by $I_\Pi$. 
It is known that $I_\Pi$ can be described as follows: 
\begin{align}\label{pi}
I_\Pi=(z_Iz_J - z_{I \cup J}z_{I \cap J} : I,J \in \Ic(\Pi)).
\end{align}

\bigskip

\section{Proofs of Theorems~\ref{main1} and \ref{main2}}\label{sec:proofs}

In this section, we prove our main theorems (Theorems~\ref{main1} and \ref{main2}) at the same time. 

\subsection{Notation}\label{sec:label}
Let us fix our notation for our specific case $K_{a,b}$. 

Let $R_{a,b}$ and $S_{a,b}$ be the following polynomial rings in $(ab+\binom{a}{2}\binom{b}{2})$ and $2(a+b)$ variables, respectively: 
\begin{align*}
R_{a,b}&:=\kk[\{z_{ij} : 1 \leq i \leq a, \, 1 \leq j \leq b\} \cup \{z_{ii'j'j} : 1 \leq i<i' \leq a, \, 1 \leq j'<j \leq b\}]; \\
S_{a,b}&:=\kk[x_1,\ldots,x_a,x_1',\ldots,x_b',y_1',\ldots,y_a',y_1,\ldots,y_b]. 
\end{align*} 
We introduce a monomial order on $S_{a,b}$ as follows: 
\begin{equation}\label{order}
\begin{split}
&\text{$<$ is the graded lexicographic order with respect to the ordering of the variables} \\
&x_1>\cdots>x_a>x_1'>\cdots>x_b'>y_1'>\cdots>y_a'>y_1>\cdots>y_b.
\end{split}
\end{equation}
Note that a corresponding weight vector of $<$ can be chosen in the following manner: choose  
\[m_1 > \cdots > m_a > m_1' > \cdots > m_b' > n_1' > \cdots > n_a' > n_1 > \cdots > n_b > 0\]
satisfying e.g. $m_1>m_2+\cdots+n_b$, $m_2>m_3+\cdots+n_b$, and so on, 
and let \begin{align}\label{w}w_{a,b}:=(m_1,\ldots,n_b) \in \RR_{>0}^{2(a+b)}.\end{align}

Given $1 \leq i \leq a$ and $1 \leq j \leq b$, let 
\[f_{ij}:=x_iy_j-x_j'y_i'.\]
Moreover, given $1 \leq i <i' \leq a$ and $1 \leq j' <j \leq b$, let \begin{align*}
f_{ii'j'j}\,:=& \, f_{ij'}f_{i'j}-f_{ij}f_{i'j'} \\
=&\, x_ix_{j'}'y_{i'}'y_j+x_{i'}x_j'y_i'y_{j'}-x_ix_j'y_{i'}'y_{j'}-x_{i'}x_{j'}'y_i'y_j. 
\end{align*}
Let \[
\Gc_{a,b}=\{f_{ij} : 1 \leq i \leq a, \, 1 \leq j \leq b\} \cup \{f_{ii'j'j} : 1 \leq i<i' \leq a, \, 1 \leq j' < j \leq b\}. 
\]
Let $I$ be the kernel of the following ring homomorphism: 
\[\varphi_{a,b} : R_{a,b} \rightarrow S_{a,b}; \;\; \varphi_{a,b}(z_{ij})=f_{ij}, \;\; \varphi_{a,b}(z_{ii'j'j})=f_{ii'j'j}.\]
Namely, $\kk[\Gc_{a,b}]$ is isomorphic to $R_{a,b}/I$.

Let $A_{a,b} \subset (\ZZ_{\geq 0}^{a+b})^2$ be defined as follows: 
\begin{equation}\label{Aab}
\begin{split}
A_{a,b}:=&\{\eb_i+\fb_j : 1 \leq i \leq a, \, 1 \leq j \leq b\} \\
\cup &\{\eb_i+\eb_{i'}'+\fb_{j'}'+\fb_j : 1 \leq i<i' \leq a, \, 1 \leq j'<j \leq b\}, 
\end{split}
\end{equation}
where $\eb_1,\ldots,\eb_a,\fb_1,\ldots,\fb_b, \eb_1',\ldots,\eb_a',\fb_1',\ldots,\fb_b'$ form a $\ZZ$-basis of $\ZZ^{2(a+b)}$. Since 
\begin{align*}
\ini_<(f_{ij})=x_iy_j\;\text{ and }\; \ini_<(f_{ii'j'j})=x_ix_{j'}'y_{i'}'y_j 
\end{align*}
hold, we can find a one-to-one correspondence between $A_{a,b}$ and the initial monomials of $\Gc_{a,b}$ 
by assigning $x_i$ to $\eb_i$, $y_{i'}'$ to $\eb_{i'}'$, $x_{j'}'$ to $\fb_{j'}'$, and $y_j$ to $\fb_j$, respectively.  

\bigskip

Our goal of this section is to prove Theorems~\ref{main1} and \ref{main2}. 
To this end, we divide the discussions into the following four steps: 
\begin{itemize}
\item[(I)] Prove the unimodular equivalence of the associated affine monoid of $\ini_<(\Gc_{a,b})$ and the affine monoid corresponding to the poset ideals of $\Pi_{a,b}$; 
\item[(II)] Compute the toric ideal $I_{A_{a,b}}$ of $A_{a,b}$ by using (I); 
\item[(III)] Find (a part of) generators of $I$ and define $J$ generated by them;  
\item[(IV)] Prove $I_{A_{a,b}} \subseteq \ini_{A_{a,b}^Tw_{a,b}}(J)$. 
\end{itemize}
Then $I_{A_{a,b}} \subseteq \ini_{A_{a,b}^Tw_{a,b}}(J) \subseteq \ini_{A_{a,b}^Tw_{a,b}}(I) \subseteq I_{A_{a,b}}$ can be obtained. 
(The first inclusion is (IV), the second one follows by definition of $J$, and the third one comes from Theorem~\ref{sagbi}.) 
Hence, $I_{A_{a,b}} = \ini_{A_{a,b}^Tw_{a,b}}(J) = \ini_{A_{a,b}^Tw_{a,b}}(I)$, so we conclude that $\Gc_{a,b}$ forms a SAGBI basis with respect to $<$ by Theorem~\ref{sagbi}. 
We also obtain that $I=J$ simultaneously. 
Furthermore, since $\Gc_{a,b}$ forms a SAGBI basis for $\kk[\Gc_{a,b}]$, (I) implies that $\ini_<(\kk[\Gc_{a,b}]) \cong \kk[\Pi_{a,b}]$.

\subsection{Order ideals of $\Pi_{a,b}$ and the initial monomials of $\Gc_{a,b}$}\label{I}

The first step (I) is to construct a unimodular transformation from the initial monomials of $\Gc_{a,b}$ to the poset ideals of $\Pi_{a,b}$. 

Employing the notation as in Figure~\ref{piab}, we see that $\Ic(\Pi_{a,b})$ consists of the following poset ideals: 
\begin{equation}\label{Bab}
\begin{split}
&\emptyset, \; \langle e_i \rangle, \; \langle f_j \rangle, \; \langle e_i,f_j \rangle \; \text{ for } \; 1 \leq i \leq a-1, \; 2 \leq j \leq b \;\; \text{ and } \\
&\langle e_i, e_{i'}' \rangle, \; \langle e_i,e_{i'}',f_j \rangle, \; \langle f_{j'}',f_j \rangle, \; \langle e_i,f_{j'}',f_j \rangle, \; \langle e_i,e_{i'}',f_{j'}', f_j \rangle \\ 
&\quad\quad\quad\;\,\quad\quad\quad\quad\quad \text{ for } \; 1 \leq i<i' \leq a-1, \; 1 \leq j'<j \leq b. 
\end{split}
\end{equation}
We identify those poset ideals with those points in $\ZZ^{\Pi_{a,b}}$ whose $\ZZ$-basis is
\[\eb_1,\ldots,\eb_{a-1},\eb_2',\ldots,\eb_{a-1}'. \fb_1',\ldots,\fb_{b-1}',\fb_2,\ldots,\fb_b \in \ZZ^{\Pi_{a,b}} (=\ZZ^{2a+2b-5}).\]
More precisely, given $E \subset \{e_1,\ldots,e_{a-1}\}$, $E' \subset \{e_2',\ldots,e_{a-1}'\}$, $F' \subset \{f_1',\ldots,f_{b-1}'\}$ and $F \subset \{f_2,\ldots,f_b\}$, 
we associate $E \cup E' \cup F' \cup F \subset \Pi_{a,b}$ with 
\[\sum_{e_i \in E}\eb_{i}+\sum_{e_{i'}' \in E'}\eb_{i'}'+\sum_{f_{j'}' \in F'}\fb'_{j'}+\sum_{f_j \in F}\fb_j \in \ZZ^{\Pi_{a,b}}.\] 

Let $B_{a,b} \subset \ZZ^{\Pi_{a,b}}$ be the set of those points in $\ZZ^{\Pi_{a,b}}$ corresponding to \eqref{Bab}. 
Our goal of this subsection is to construct a unimodular transformation from $A_{a,b}$ to $B_{a,b}$. 

\medskip

Let us consider the following transformation. 
Given \[\sum_{i=1}^a c_i \eb_i + \sum_{i'=2}^a c_{i'}' \eb_{i'}' + \sum_{j'=1}^{b-1} d_{j'}' \fb_{j'}' + \sum_{j=1}^b d_j \fb_j \in A_{a,b}  
\text{ with }c_i,c_{i'}',d_{j'}',d_j \in \{0,1\},\] 
where $\eb_1'$ and $\fb_b'$ never appear by definition \eqref{Aab}, we apply
\begin{align*}
c_i &\mapsto c_1+\cdots+c_i \;\text{ for }1 \leq i \leq a, \\ 
c_i' &\mapsto c_2'+\cdots+c_{i'}' \text{ for }2 \leq i' \leq a-1, \;\; c_a' \mapsto c_2'+\cdots+c_a'-(d_1'+\cdots+d_{b-1}'), \\
d_{j'}' &\mapsto d_{j'}'+\cdots+d_b' \text{ for }1 \leq j' \leq b-1, \text{ and }\\
d_j &\mapsto d_j+\cdots+d_b \text{ for }1 \leq j \leq b.  
\end{align*}
Then the points in $A_{a,b}$ become to satisfy that each of the coefficients of $\eb_a$ and $\fb_1$ (resp. $\eb_a'$) is always $1$ (resp. $0$). 
By ignoring those three coordinates, the points in $A_{a,b}$ are unimodularly transformed as follows: 
\begin{align*}
\eb_a+\fb_1 &\mapsto {\bf 0} \overset{1 : 1}{\longleftrightarrow} \emptyset, \;\;
\eb_i+\fb_1 \mapsto \eb_i+\cdots+\eb_{a-1} \overset{1 : 1}{\longleftrightarrow} \langle e_i \rangle, \;\; 
\eb_a+\fb_j \mapsto \fb_2+\cdots+\fb_j \overset{1 : 1}{\longleftrightarrow} \langle f_j \rangle, \\
\eb_i+\fb_j &\mapsto \eb_i+\cdots+\eb_{a-1}+\fb_2+\cdots+\fb_j \overset{1 : 1}{\longleftrightarrow} \langle e_i,f_j \rangle 
\end{align*}
for $1 \leq i \leq a-1, \, 2 \leq j \leq b$,  
\begin{align*}
\eb_i+\eb_{i'}'+\fb_1'+\fb_2&\mapsto \eb_i+\cdots+\eb_{a-1}+\eb_{i'}'+\cdots+\eb_{a-1}'+\fb_1'+\fb_2 \overset{1 : 1}{\longleftrightarrow} \langle e_i,e_{i'}' \rangle, \\ 
\eb_i+\eb_{i'}'+\fb_1'+\fb_j&\mapsto \eb_i+\cdots+\eb_{a-1}+\eb_{i'}'+\cdots+\eb_{a-1}'+\fb_1'+\fb_2+\cdots+\fb_j 
\overset{1 : 1}{\longleftrightarrow} \langle e_i,e_{i'}',f_j \rangle
\end{align*}
for $1 \leq i<i' \leq a, \, 3 \leq j \leq b$, 
\begin{align*}
\eb_{a-1}+\eb_a'+\fb_{j'}'+\fb_j&\mapsto \eb_{a-1}+\fb_1'+\cdots+\fb_{j'}'+\fb_2+\cdots+\fb_j \overset{1 : 1}{\longleftrightarrow} \langle f_{j'}',f_j \rangle, \\ 
\eb_i+\eb_a'+\fb_{j'}'+\fb_j&\mapsto \eb_i+\cdots+\eb_{a-1}+\fb_1'+\cdots+\fb_{j'}'+\fb_2+\cdots+\fb_j \overset{1 : 1}{\longleftrightarrow} \langle e_i,f_{j'}',f_j \rangle, 
\end{align*}
for $1 \leq i \leq a-2, \, 1 \leq j'<j \leq b$, and 
\[\eb_i+\eb_{i'}'+\fb_{j'}'+\fb_j\mapsto \eb_i+\cdots+\eb_{a-1}+\eb_{i'}'+\cdots+\eb_{a-1}'+\fb_1'+\cdots+\fb_{j'}'+\fb_2+\cdots+\fb_j 
\overset{1 : 1}{\longleftrightarrow} \langle e_i,e_{i'}',f_{j'}',f_j \rangle\]
for $1 \leq i<i' \leq a-1, \, 2 \leq j'<j \leq b$.

\subsection{Computation of the toric ideal of $\Pi_{a,b}$}\label{II}

Let $A=A_{a,b}$. 
Consider the monomial algebra $\kk[A]$. Note that $\kk[A]=\kk[\ini_<(\Gc_{a,b})]$. 
Let $I_A$ be the kernel of the following surjective ring homomorphism: 
\begin{align*}
R_{a,b} \rightarrow \kk[A]; \quad\quad z_{ij} &\mapsto x_iy_j \quad\quad\, \text{ for } \; 1 \leq i \leq a, \, 1 \leq j \leq b, \\ 
z_{ii'j'j} &\mapsto x_iy_{i'}'x_{j'}'y_j \; \text{ for } \; 1 \leq i<i' \leq a, \, 1 \leq j'<j \leq b. 
\end{align*}
By Subsection~\ref{I}, $I_A$ is isomorphic to the toric ideal of $\Pi_{a,b}$. 
Our next goal (II) is to compute $I_A$ through this identification, i.e., by using the general description \eqref{pi} of the toric ideals of posets. 

Regarding the poset ideals of $\Pi_{a,b}$, we see that  
\begin{align}\label{eq:minmax}
\langle e_i,f_j \rangle \cup \langle e_p,f_q \rangle = \langle e_{\min\{i,p\}},f_{\max\{j,q\}} \rangle \text{ and }
\langle e_i,f_j \rangle \cap \langle e_p,f_q \rangle = \langle e_{\max\{i,p\}},f_{\min\{j,q\}} \rangle  
\end{align}
for $1 \leq i,p \leq a, 1 \leq j,q \leq b$. 
Other poset ideals are similar. By \eqref{eq:minmax}, \eqref{pi}, and the identification described in Subsection~\ref{I}, 
we see that $I_A$ (i.e. the toric ideal of $\Pi_{a,b}$) is generated by the following binomials: 
\begin{align*}
&z_{ij}z_{pq} - z_{iq}z_{pj} \;\,\text{ for }1 \leq i \leq p \leq a, \, 1 \leq j \leq q \leq b; \\
&z_{ij}z_{pp'q'q}-z_{\max\{i,p\}\min\{j,q\}}z_{\min\{i,p\}p'q'\max\{j,q\}} \\
&\quad\quad\quad\quad\quad\quad\text{for }1 \leq i \leq a, \, 1 \leq j \leq b, \, 1 \leq p<p' \leq a, \, 1 \leq q'<q \leq b; \\
&z_{ii'j'j}z_{pp'q'q}-z_{\min\{i,p\}\min\{i',p'\}\max\{j',q'\}\max\{j,q\}}z_{\max\{i,p\}\max\{i',p'\}\min\{j',q'\}\min\{j,q\}} \\
&\quad\quad\quad\quad\quad\quad\text{for } 1 \leq i<i' \leq a, \, 1 \leq j'<j \leq b, \, 1 \leq p<p' \leq a, \, 1 \leq q'<q \leq b. 
\end{align*}
On the first binomial, if $i=p$, then $z_{ij}z_{pq}-z_{iq}z_{pj}$ becomes $0$, so we can remove this case. Similarly, the case of $j=q$ is redundant. 
By similar discussions for other binomials, for $1 \leq i,i',p,p' \leq a$, $1 \leq j,j',q,q' \leq b$ with $i<i'$, $p<p'$, $j'<j$ and $q'<q$, 
we can reduce the above binomials as follows: 
\begin{equation}\label{eq:binom}
\begin{split}
&z_{ij}z_{pq} - z_{iq}z_{pj} \;\,\text{ for } i < p, \, j < q; \\
&z_{ij}z_{pp'q'q}-z_{pq}z_{ip'q'j} \;\,\text{ for }i < p, \, q < j; \\
&z_{ij}z_{pp'q'q}-z_{iq}z_{pp'q'j} \;\,\text{ for }p \leq i, \, q < j; \\
&z_{ij}z_{pp'q'q}-z_{pj}z_{ip'q'q} \;\,\text{ for }i < p, \, j \leq q; \\
&z_{ii'j'j}z_{pp'q'q}-z_{ii'q'q}z_{pp'j'j} \;\,\text{ for }i \leq p, \, i' \leq p', \, j' \leq q', \, j \leq q; \\
&z_{ii'j'j}z_{pp'q'q}-z_{pi'q'q}z_{ip'j'j} \;\,\text{ for }i > p, \, i' < p', \, j' \leq q', \, j \leq q; \\
&z_{ii'j'j}z_{pp'q'q}-z_{ip'q'q}z_{pi'j'j} \;\,\text{ for }i < p, \, i' > p', \, j' \leq q', \, j \leq q; \\
&z_{ii'j'j}z_{pp'q'q}-z_{ii'j'q}z_{pp'q'j} \;\,\text{ for }i \leq p, \, i' \leq p', \, j' > q', \, j < q; \\ 
&z_{ii'j'j}z_{pp'q'q}-z_{ii'q'j}z_{pp'j'q} \;\,\text{ for }i \leq p, \, i' \leq p', \, j' < q', \, j>q; \\ 
&z_{ii'j'j}z_{pp'q'q}-z_{pi'j'q}z_{ip'q'j} \;\,\text{ for }i > p, \, i' < p', \, j' > q', \, j < q \\
&\;\;\qquad\qquad\qquad\qquad\qquad\text{ or }i < p, \, i' > p', \, j' < q', \, j>q; \\
&z_{ii'j'j}z_{pp'q'q}-z_{pi'q'j}z_{ip'j'q} \;\,\text{ for }i > p, \, i' < p', \, j' < q', \, j > q \\
&\;\;\qquad\qquad\qquad\qquad\qquad\text{ or }i < p, \, i' > p', \, j' > q', \, j < q. 
\end{split}
\end{equation}

\subsection{Generators of $I$}\label{III}

Our next goal is to find some generators of $I$. 

For $1 \leq i,i',p,p' \leq a$, $1 \leq j,j',q,q' \leq b$ with $i<i'$, $p<p'$, $j'<j$ and $q'<q$, we introduce several polynomials in $R_{a,b}$:  
\begin{equation}\label{eq:J}
\begin{split}
&z_{ij}z_{pq}-z_{iq}z_{pj}-z_{ipjq}\;\, \text{ for }i<p, \, j<q; \\
&z_{ij}z_{pp'q'q}-z_{pq}z_{ip'q'j}+z_{iq'}z_{pp'qj}+z_{p'j}z_{ipq'q}+z_{p'q'}z_{ipqj} \;\,\text{ for }i<p, \, q<j; \\
&z_{ij}z_{pp'q'q}-z_{iq}z_{pp'q'j}+z_{iq'}z_{pp'qj} \;\,\text{ for }p \leq i, \, q<j; \\
&z_{ij}z_{pp'q'q}-z_{pj}z_{ip'q'q}+z_{p'j}z_{ipq'q} \;\,\text{ for }i < p, \, j \leq q; \\
&z_{ii'j'j}z_{pp'q'q}-z_{ii'q'q}z_{pp'j'j} \;\,\text{ for }i \leq p, \, i' \leq p', \, j' \leq q', \, j \leq q; \\
&z_{ii'j'j}z_{pp'q'q}-z_{pi'q'q}z_{ip'j'j}+z_{piq'q}z_{i'p'j'j} \;\,\text{ for }i > p, \, i' < p', \, j' \leq q', \, j \leq q; \\
&z_{ii'j'j}z_{pp'q'q}-z_{ip'q'q}z_{pi'j'j}+z_{p'i'j'j}z_{ipq'q} \;\,\text{ for }i < p, \, i' > p', \, j' \leq q', \, j \leq q; \\
&z_{ii'j'j}z_{pp'q'q}-z_{ii'j'q}z_{pp'q'j}+z_{ii'jq}z_{pp'q'j'} \;\,\text{ for }i \leq p, \, i' \leq p', \, j' > q', \, j < q; \\ 
&z_{ii'j'j}z_{pp'q'q}-z_{ii'q'j}z_{pp'j'q}+z_{ii'j'q'}z_{pp'qj} \;\,\text{ for }i \leq p, \, i' \leq p', \, j' < q', \, j>q; \\ 
&z_{ii'j'j}z_{pp'q'q}-z_{pi'j'q}z_{ip'q'j}+z_{pij'q}z_{i'p'q'j}+z_{ip'jq}z_{pi'q'j'}-z_{i'p'jq}z_{piq'j'} \\
&\quad\quad\quad\;\;\;\quad\quad\quad\quad\quad\quad\quad\quad\quad\quad\quad\quad\text{ for }i > p, \, i' < p', \, j' > q', \, j < q; \\
&z_{ii'j'j}z_{pp'q'q}-z_{pi'j'q}z_{ip'q'j}+z_{ipq'j}z_{p'i'j'q}+z_{pi'qj}z_{ip'j'q'}-z_{p'i'qj}z_{ipj'q'} \\
&\quad\quad\quad\;\;\;\quad\quad\quad\quad\quad\quad\quad\quad\quad\quad\quad\quad\text{ for }i < p, \, i' > p', \, j' < q', \, j>q; \\
&z_{ii'j'j}z_{pp'q'q}-z_{pi'q'j}z_{ip'j'q}+z_{piq'j}z_{i'p'j'q}+z_{ip'qj}z_{pi'j'q'}-z_{i'p'qj}z_{pij'q'} \\
&\quad\quad\quad\;\;\;\quad\quad\quad\quad\quad\quad\quad\quad\quad\quad\quad\quad\text{ for }i > p, \, i' < p', \, j' < q', \, j > q; \\
&z_{ii'j'j}z_{pp'q'q}-z_{pi'q'j}z_{ip'j'q}+z_{ipj'q}z_{p'i'q'j}+z_{pi'jq}z_{ip'q'j'}-z_{p'i'jq}z_{ipq'j'} \\
&\quad\quad\quad\;\;\;\quad\quad\quad\quad\quad\quad\quad\quad\quad\quad\quad\quad\text{ for }i < p, \, i' > p', \, j' > q', \, j < q. 
\end{split}
\end{equation}
Then it is straightforward to check that these polynomials belong to $I=\mathrm{Ker} (\varphi_{a,b})$.  

Let $J$ be the ideal generated by the polynomials \eqref{eq:J}. 
Clearly, $J \subseteq I$ holds.  

\subsection{Proof of Theorems~\ref{main1} and \ref{main2}}

Let $w=w_{a,b}$ be the one given in \eqref{w}. 

Our final task is to show that $I_A \subseteq \ini_{A^Tw}(J)$. 
Since $I_A$ is generated by the binomials described in \eqref{eq:binom}, 
it is enough to show that for any binomial $f$ in \eqref{eq:binom}, we can find a polynomial $g$ from \eqref{eq:J} such that $f=\ini_{A^Tw}(g)$. 

Here, by definition of $w$, the weights of the variables $z_{ij}$ and $z_{ii'j'j}$ are as follows: 
\[(\text{weight of }z_{ij})=m_i+n_j \;\;\text{ and }\;\;(\text{weight of }z_{ii'j'j})=m_i+m_{i'}'+n_{j'}'+n_j.\]
By taking these into account, for example, the weight of the first two monomials of $z_{ij}z_{pq}-z_{iq}z_{pj}-z_{ipjq}$ is $m_i+m_p+n_j+n_q$, 
while that of the third one is $m_i+m_p'+n_j'+n_q$. 
Since $m_p > m_p'$, we obtain that \[\ini_{A^Tw}(z_{ij}z_{pq}-z_{iq}z_{pj}-z_{ipjq})=z_{ij}z_{pq}-z_{iq}z_{pj},\] 
which corresponds to the first binomial of \eqref{eq:binom}. 

Similarly, we can verify that the first two monomials form the initial form $\ini_{A^Tw}(g)$ for each $g$ in \eqref{eq:J}, 
and each of those two monomials corresponds to the binomial in \eqref{eq:binom}, respectively.  
These discussions imply that $I_A \subseteq \ini_{A^Tw}(J)$, as required. 

\bigskip

\section{Further questions}\label{sec:q}

In this section, we study some further problems on $\Rc(G)$ for general graphs $G$.

One of the most fundamental invariants on commutative rings is its Krull dimension. 
Regarding the Krull dimension of $\Rc(G)$, we have the following: 
\begin{prop}\label{prop:dim}
Let $G$ be a graph on $d$ vertices with $n$ edges. 
\begin{itemize}
\item[(i)] If $G$ is bipartite, then \[\dim(\Rc(G)) \leq \min\{n,2d-4\}. \]
\item[(ii)] If $G$ is non-bipartite, then \[\dim(\Rc(G)) \leq \min\{n,2d-3\}. \]
\end{itemize}
\end{prop}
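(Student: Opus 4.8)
# Proof Proposal for Proposition \ref{prop:dim}

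\textbf{Overview of the strategy.} The plan is to bound $\dim(\Rc(G))$ from above by exhibiting $\Rc(G)$ as a homomorphic image of (or as embedded inside) a ring whose dimension is easy to compute, and then to separately obtain the two bounds $n$ and $2d-4$ (resp.\ $2d-3$) in the min. The bound by $n$ is immediate: $\Rc(G)$ is generated as a $\kk$-algebra by the $n$ binomials $\{f_{ij} : \{i,j\}\in E(G)\}$, so its Krull dimension is at most the number of generators, namely $n$. The substance of the proposition is therefore the bound $2d-4$ in the bipartite case and $2d-3$ in the non-bipartite case; these do not depend on $n$ and must come from the ambient polynomial ring $S=\kk[x_1,\dots,x_d,y_1,\dots,y_d]$ together with the structure of the generators $f_{ij}=x_iy_j-x_jy_i$.

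\textbf{The geometric/transcendence-degree computation.} Since $\Rc(G)$ is a domain (it is a subalgebra of the polynomial ring $S$), its Krull dimension equals the transcendence degree over $\kk$ of its fraction field, equivalently the dimension of the image of the map $\Phi_G\colon \AA^{2d}\to \AA^{n}$ with coordinates $(x_iy_j-x_jy_i)_{\{i,j\}\in E(G)}$, i.e.\ the dimension of the closure of $\Phi_G(\AA^{2d})$. So I would bound $\dim(\Rc(G))$ by the maximal rank of the Jacobian of $\Phi_G$ at a generic point, or equivalently compute the generic fiber dimension and subtract: $\dim(\Rc(G)) = 2d - (\text{generic fiber dimension of }\Phi_G)$. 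The key observation is that the form $x_iy_j - x_jy_i$ is exactly the $2\times 2$ minor $\det\begin{pmatrix} x_i & x_j \\ y_i & y_j\end{pmatrix}$, so $\Phi_G$ factors through the assignment sending the point $(x,y)\in\AA^{2d}$ to the $2\times d$ matrix $M$ with columns $\binom{x_i}{y_i}$, and then reading off the specified minors. Each minor is invariant under the left $\mathrm{GL}_2$-action $M\mapsto gM$ with $\det g = 1$ (i.e.\ $\mathrm{SL}_2$), and scales under the torus; more precisely, the full group acting trivially on \emph{ratios} of minors is $\mathrm{GL}_2$ acting on the left (dimension $4$), together with the individual column scalings that the connectedness of $G$ ties together. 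The plan is to show that for a connected graph the generic fiber of $\Phi_G$ has dimension at least $4$ when $G$ is bipartite and at least $3$ when $G$ is non-bipartite, yielding the stated bounds $2d-4$ and $2d-3$.

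\textbf{Locating the fiber dimension.} Concretely, fix a generic matrix $M$ of rank $2$. Any $M'$ with the same tuple of minors over $E(G)$ can, after scaling columns, be compared to $M$: on each edge $\{i,j\}$ the relation $\det(M'_{\{i,j\}}) = \det(M_{\{i,j\}})$ holds. The group $\mathrm{GL}_2$ acts on $M$ by $M\mapsto gM$ with all $2\times 2$ minors multiplied by $\det g$; to keep \emph{all} minors fixed one needs $\det g = 1$, giving a $3$-dimensional stabilizer contribution ($\mathrm{SL}_2$). This already gives the non-bipartite bound $\dim(\Rc(G))\le 2d-3$ in general (for connected $G$ with at least one edge), and in particular part (ii) — here I would check that $\mathrm{SL}_2$ acts with finite stabilizer on a generic rank-$2$ matrix with $d\ge 2$ columns, so the orbit genuinely has dimension $3$ inside a fiber. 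For the bipartite improvement to $2d-4$ in part (i): if $G$ is bipartite with parts $V_1\sqcup V_2$, then negating $y_i$ for $i\in V_1$ changes the sign of $f_{ij}$ only across... — more usefully, scale the columns indexed by $V_1$ by $t$ and those by $V_2$ by $t^{-1}$; this leaves each $f_{ij}$ with $i\in V_1, j\in V_2$ unchanged, and since all edges of a bipartite graph cross the partition, every generator is fixed. This one-parameter family is generically not contained in the $\mathrm{SL}_2$-orbit (it is a column operation, not a row operation), so the generic fiber has dimension $\ge 4$, giving $\dim(\Rc(G))\le 2d-4$.

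\textbf{Main obstacle and loose ends.} The step I expect to require the most care is verifying that the group actions just described produce fiber components of the \emph{full} expected dimension — i.e.\ that the $\mathrm{SL}_2$-action (resp.\ the combined $\mathrm{SL}_2$ plus torus action in the bipartite case) has generically finite, or at least low-dimensional, stabilizer on the relevant space of $2\times d$ matrices, so that no dimension is lost; this uses $d\ge 2$ and connectedness of $G$ essentially. One must also confirm that the fiber doesn't \emph{exceed} these dimensions in a way that would make the bound vacuous — but since we only want an upper bound on $\dim(\Rc(G))$, a \emph{lower} bound on the generic fiber dimension suffices, and that is exactly what the explicit group orbits provide. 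Finally I would note the two bounds combine by the trivial inequality $\dim(\Rc(G))\le \min$ of the two separate bounds already established, and that for $G=K_{a,b}$ (bipartite, $d=a+b$, $n=ab$) the bound $2d-4 = 2(a+b-2)$ is attained by Corollary \ref{kei}, showing part (i) is sharp.
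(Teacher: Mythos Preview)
Your argument is correct and takes a genuinely different route from the paper's. The paper argues by embedding: since $G\subset K_d$, one has $\Rc(G)\subset\Rc(K_d)=\Ac_{2,d}$, and the classical fact $\dim\Ac_{2,d}=2(d-2)+1=2d-3$ gives (ii); for (i), a bipartite $G$ on $d$ vertices sits inside some $K_{a,d-a}$, so $\Rc(G)\subset\Rc(K_{a,d-a})$, and Corollary~\ref{kei} supplies $\dim\Rc(K_{a,d-a})=2d-4$. The bound by $n$ is obtained exactly as you do.

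You instead bound the generic fiber of the minor map $\AA^{2d}\to\AA^n$ from below via explicit group actions: the left $\mathrm{SL}_2$-action on the $2\times d$ matrix preserves every $2\times 2$ minor and has generically trivial stabilizer (any $g\in\mathrm{SL}_2$ fixing two independent columns is the identity), yielding fiber dimension $\ge 3$; in the bipartite case the additional $\GG_m$ scaling the columns in the two color classes by $t^{\pm 1}$ commutes with this and enlarges the generic orbit to dimension $4$. This is self-contained and does not rely on knowing $\dim\Ac_{2,d}$ or on the paper's main theorem, and it makes transparent \emph{why} bipartiteness buys exactly one extra dimension. The paper's proof is shorter in context, since it simply reduces to results already in hand; yours is more elementary and more portable. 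Your stated loose end about stabilizers is easily closed: the combined $\mathrm{SL}_2\times\GG_m$-stabilizer of a generic $M$ is finite as soon as one color class contains two columns (generically independent), i.e., whenever $d\ge 3$.
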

\begin{proof}
In general, $G$ is always a subgraph of $K_d$. This means that $\Rc(G)$ is a subring of $\Rc(K_d)=\Ac_{2,d}$. 
It is well known that $\dim (\Ac_{k,d})=k(d-k)+1$. 
(This can be seen, e.g., via that $\dim (\Ac_{k,d})$ coincides with the Krull dimension of its initial algebra, 
which is equal to $k(d-k)+1$. See \cite[Chapter 11]{sturmfels} for more details.) 
Hence, $\dim (\Rc(G)) \leq 2(d-2)+1=2d-3$ follows. 

In the case where $G$ is bipartite, there is a positive integer $a$ such that $G$ is a subgraph of $K_{a,d-a}$, i.e., $\Rc(G)$ is a subring of $\Rc(K_{a,d-a})$. 
Since $\dim (\Rc(K_{a,d-a}))=2d-4$ by Corollary~\ref{kei}, we obtain $\dim (\Rc(G)) \leq 2d-4$. 

Finally, $\dim(\Rc(G)) \leq n$ follows from that $\Rc(G)$ is generated by $n$ polynomials. 
\end{proof}

Now, it is natural to think of when the equality holds. Namely, 
\begin{q}
Let $G$ be a graph on $d$ vertices with $n$ edges. 
When does $\dim(\Rc(G))=\min\{n,2d-3\}$ hold? Or when does $\dim(\Rc(G))=\min\{n,2d-4\}$ hold if $G$ is bipartite? 
\end{q}

\begin{ex}\label{ex:tree}
(i) Let $G$ be a tree with $d$ vertices. Then $\Rc(G)$ is isomorphic to the polynomial ring $\kk[z_1,\ldots,z_{d-1}]$. In particular, $\dim(\Rc(G))=d-1$. 
In fact, the points in $\ZZ^{2d}$ corresponding to the initial monomials of $f_{ij}$ for $\{i,j\} \in E(G)$ are linearly independent. 
This implies that $\dim(\ini_<(\Rc(G))) \geq d-1$. Hence, $\dim(\Rc(G)) \geq d-1$, while we know $\dim(\Rc(G)) \leq d-1$ by Proposition~\ref{prop:dim} (ii). 

\noindent
(ii) By a similar discussion to (i), we can also verify that $\Rc(G) \cong \kk[z_1,\ldots,z_d]$ if $G$ is unicyclic, 
i.e., $G$ is connected and contains exactly one cycle. 
In the case where $G$ contains an odd cycle, the points corresponding to the initial monomials for $d$ edges of $G$ become linearly independent. 
In the case where $G$ contains an even cycle, those points are not linearly independent, but we can find an additional initial monomial of $\Rc(G)$. 
In fact, let $G$ be an even cycle with $E(G)=\{\{i,i+1\} : i=1,\ldots,2k-1\} \cup \{\{1,2k\}\}$. (The general case of unicyclic graphs is similar.) 
Then the points corresponding to $\ini_<(f_{12}),\ldots,\ini_<(f_{(2k-1) \, 2k})$ and 
$\ini_<(\prod_{j=1}^kf_{(2j-1) \, 2j} - f_{1 \, 2k}\prod_{j=1}^{k-1} f_{2j \, (2j+1)})$ are linearly independent. 
\end{ex}

\begin{ex}\label{ex:other}
(i) Let $G_1$ be a graph on $8$ vertices with the edge set 
{\small\[E(G_1)=\{\{1, 2\}, \{2, 3\}, \{3, 4\}, \{4, 5\}, \{5, 6\}, \{6, 7\}, \{7, 8\}, \{1, 8\}, \{1, 4\}, \{1, 5\}, \{4, 8\}, \{5, 8\}\}.\]} See Figure~\ref{g1}.
\begin{figure}[h]
\begin{tikzpicture}[scale=0.8	]
    \node[circle, draw, fill=white, inner sep=2pt] (1) at (90:2) {1};
    \node[circle, draw, fill=white, inner sep=2pt] (2) at (45:2) {2};
    \node[circle, draw, fill=white, inner sep=2pt] (3) at (0:2) {3};
    \node[circle, draw, fill=white, inner sep=2pt] (4) at (315:2) {4};
    \node[circle, draw, fill=white, inner sep=2pt] (5) at (270:2) {5};
    \node[circle, draw, fill=white, inner sep=2pt] (6) at (225:2) {6};
    \node[circle, draw, fill=white, inner sep=2pt] (7) at (180:2) {7};
    \node[circle, draw, fill=white, inner sep=2pt] (8) at (135:2) {8};
    
    \draw (1) -- (2);
    \draw (2) -- (3);
    \draw (3) -- (4);
    \draw (4) -- (5);
    \draw (5) -- (6);
    \draw (6) -- (7);
    \draw (7) -- (8);
    \draw (8) -- (1);
    \draw (1) -- (4);
    \draw (1) -- (5);
    \draw (4) -- (8);
    \draw (5) -- (8);
\end{tikzpicture}
\caption{Graph $G_1$}\label{g1}
\end{figure}
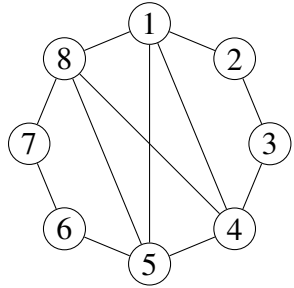
Then we see that $\dim (\Rc(G_1))=11$. This example says that the equality of Proposition~\ref{prop:dim} (i) does not necessarily hold. 

\noindent
(ii) Let $G_2$ be a bipartite graph on $8$ vertices with the following $12$ edges: 
{\small\[E(G_2)=\{\{1, 2\}, \{2, 3\}, \{3, 4\}, \{4, 5\}, \{5, 6\}, \{1, 4\}, \{1, 6\}, \{2, 5\}, \{3, 6\}, \{6, 7\}, \{7, 8\}, \{5, 8\}\}.\]} See Figure~\ref{g2}. 
\begin{figure}[h]
\begin{tikzpicture}[scale=0.8	]
    \node[circle, draw, fill=white, inner sep=2pt] (1) at (0,2) {1};
    \node[circle, draw, fill=white, inner sep=2pt] (3) at (2,2) {3};
    \node[circle, draw, fill=white, inner sep=2pt] (5) at (4,2) {5};
    \node[circle, draw, fill=white, inner sep=2pt] (7) at (6,2) {7};
    \node[circle, draw, fill=white, inner sep=2pt] (2) at (0,0) {2};
    \node[circle, draw, fill=white, inner sep=2pt] (4) at (2,0) {4};
    \node[circle, draw, fill=white, inner sep=2pt] (6) at (4,0) {6};
    \node[circle, draw, fill=white, inner sep=2pt] (8) at (6,0) {8};
    
    \draw (1) -- (2);
    \draw (2) -- (3);
    \draw (3) -- (4);
    \draw (4) -- (5);
    \draw (5) -- (6);
    \draw (1) -- (4);
    \draw (1) -- (6);
    \draw (2) -- (5);
    \draw (3) -- (6);
    \draw (6) -- (7);
    \draw (7) -- (8);
    \draw (5) -- (8);
\end{tikzpicture}
\caption{Graph $G_2$}\label{g2}
\end{figure}
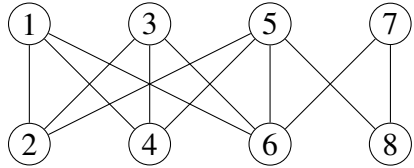
Then we see that $\dim (\Rc(G_2))=11$. This example says that the equality of Proposition~\ref{prop:dim} (ii) does not necessarily hold, either. 

\medskip

These computations are performed by using {\tt Macaulay2}. 
\end{ex}

Finally, we mention the Cohen-Macaulayness of $\Rc(G)$. 
As far as the author computes, $\Rc(G)$ is always Cohen-Macaulay. Thus, the following problem naturally arises: 
\begin{prob}
Prove that $\Rc(G)$ is Cohen-Macaulay for any $G$, and characterize when $\Rc(G)$ is Gorenstein. 
\end{prob}

\bigskip

\bibliographystyle{amsalpha}
\bibliography{bibliography}

\end{document}